\newcommand{\Z}{\mathbb{Z}}
\newcommand{\C}{\mathbb{C}}
\newcommand{\R}{\mathbb{R}}
\newcommand{\mym}[1]{\operatorname{\tau}_{#1}}
\newcommand{\myc}[1]{\operatorname{c}_{#1}}
\newcommand{\myr}[1]{\operatorname{r}_{#1}}
\newcommand{\mysm}[1]{\operatorname{SM}(#1)}
\newcommand{\mysms}[2]{\operatorname{SM}_{#1}(#2)}
\newcommand{\mytr}[1]{{#1}^{\mathrm{T}}}
\newcommand{\myem}{[]}
\newtheorem{theorem}{Theorem}
\newtheorem{lemma}{Lemma}
\theoremstyle{example}
\newtheorem{remark}{Remark}
\tikzstyle arrowstyle=[scale=1]
\tikzstyle directed=[postaction={decorate,decoration={markings,
    mark=at position .85 with {\arrow[arrowstyle]{stealth}}}}]
\title{On symmetric matrices associated with oriented link diagrams}
\author{Rinat Kashaev}
\address{Section de math\'ematiques, Universit\'e de Gen\`eve,
2-4 rue du Li\`evre, 1211 Gen\`eve 4, Suisse\\}
\email{rinat.kashaev@unige.ch}
\date{January 18, 2018}
\thanks{Supported in part by the Swiss National Science Foundation}
\begin{document}
\begin{abstract} 
Let  $D$ be an oriented link diagram with the set of regions $\myr{D}$.  We define a symmetric map (or matrix) $\mym{D}\colon \myr{D}\times  \myr{D} \to \Z[x]$ that gives rise to an invariant of oriented links, based on a slightly modified $S$-equivalence of Trotter and Murasugi in the space of symmetric matrices. In particular, for real $x$, the negative signature of $\mym{D}$ corrected by the writhe is conjecturally twice the Tristram--Levine signature function, where  $2x=\sqrt{t}+\frac1{\sqrt{t}}$ with $t$ being the indeterminate of the Alexander polynomial. 
\end{abstract}
\maketitle
\section{Introduction}
In this paper, we describe a result obtained in an attempt to understand the metaplectic invariants of Goldschmidt--Jones~\cite{MR1012438}, generalizing the cyclotomic invariants of Kobayashi--Murakami--Murakami~\cite{MR974081}, from the standpoint of (quantum) discrete quantum field theoretical (or complexified statistical mechanics) models on link diagrams. The Goldschmidt--Jones construction is based on the unitarity of the Burau representation discovered by Squier~\cite{MR727232}. Eventually, one can indeed construct an IRF model~\cite{MR690578,MR990215} specified by a choice of a Pontryagin self-dual locally compact abelian group, together with a gaussian exponential and a group endomorphism symmetric with respect to the non-degenerate bi-character induced from the gaussian exponential. Leaving this part for a separate paper, here, we consider the classical (non-quantum) level, where all these models are controlled by a universal symmetric matrix $\mym{D}\colon \myr{D}\times  \myr{D} \to \Z[x]$ indexed by the set $\myr{D}$ of regions of an oriented link diagram $D$ and with coefficients taking their values in the space $\Z[x]$ of polynomials  in one indeterminate with integer coefficients. This matrix plays the role of an Alexander matrix in the description of the Alexander module of knot complements, and the indeterminate $x$ is expected to be related to the indeterminate $t$ of the Alexander module through the formula $2x=\sqrt{t}+\frac1{\sqrt{t}}$. Given the fact that our matrix is 
symmetric, one could try to use  the $S$-equivalence of Trotter--Murasugi in combination with congruences, in order to extract a link invariant by using the signature of the matrix. 
The result of this construction is the content of Theorem~\ref{thm-sr-equiv}. 

The paper is organized as follows. In the next Section~\ref{sec1}, we explain our setup, 
together with the notation and conventions used. In Section~\ref{sec2}, we define the matrix $\mym{D}$, formulate the main Theorem~\ref{thm-sr-equiv}, and present its proof. In the last Section~\ref{sec3}, we describe the results for few simplest examples.
\subsection*{Acknowledgements} I would like to thank Louis-Hadrien Robert for interesting discussions. The work is supported in part by the Swiss National Science Foundation.

\section{Notation, terminology, definitions, and conventions} \label{sec1}
\subsection{Cardinality of sets} For any finite set $X$, we denote by $|X|$ the number of elements in it.
\subsection{Sets of maps} For two sets $A$ and $B$, $A^B$ denotes the set of all maps from $B$ to $A$.

 \subsection{Integers as the finite ordinals} By abuse of notation, we will find it convenient to identify each non-negative integer $n\in\mathbb{Z}_{\ge0}$ with the corresponding von Neumann ordinal number: $0=\{\}$,  $1=\{0\}$, $2=\{0,1\}$, and so on. Thus, the writing $i\in n$ is equivalent to the writing $i\in \{0,1,\dots, n-1\}=\mathbb{Z}_{< n}\cap \mathbb{Z}_{\ge 0}$. That means that we have only set-theoretical isomorphisms like $2^3\simeq 8$. Nonetheless, in the most of the cases, we can still assume $2^3=8$, provided no contextual confusion occurs. 
\subsection{Indexation by lower indices} For positive integers $m,n,\dots$, a set $X$, and a map $f\colon m\times n\times \dots \to X$, we mostly write $f_{i,j,\dots}$ or $(f)_{i,j,\dots}$ instead of $f(i,j,\dots)$ where $(i,j,\dots)\in m\times n\times\dots$. 
\subsection{Charateristic functions} For two sets $A$ and $B$, $\chi_A\in 2^B$ stands for the characteristic function of the subset $A\cap B\subset B$, i.e.
\begin{equation}
\chi_A(x)=\left\{
\begin{array}{cl}
 1&\text{if } x\in A\cap B,\\
 0&\text{if } x\in B\setminus A.
\end{array}
\right.
\end{equation}
For example, the function $\chi_{\{\sqrt{2}\}}$ is defined  on any set, containing or not $\sqrt{2}$. It is also convenient to write $\delta_{x,y}$ instead of $\chi_{\{x\}}(y)$. In the case, where $A=B$, we also informally write $1$ instead of $\chi_A$.
\subsection{Conventions on vectors and matrices} Let $R$ be a  commutative ring (with unit) and $X$, a finite set.  A \emph{\color{blue} vector} over $R$ indexed by $X$ is, by definition, an element of the additive group $R^{X}$. A vector is called  \emph{\color{blue}explicit} if it is indexed by $X\subset n$ for some positive integer $n$.

If the indexing set is a cartesian product $X\times Y$, then the vector is called \emph{\color{blue} matrix}. A matrix is called  \emph{\color{blue} explicit} if it is indexed by  $X\times Y$ where  $X\subset m$ and $Y\subset n$ for some positive integers $m$ and $n$.

     The \emph{\color{blue} transpose} of a matrix $f\in R^{X\times Y}$ is the matrix $\mytr{f}\in R^{Y\times X}$ defined by $\mytr{f}(y,x)=f(x,y)$ for all  $(x,y)\in X\times Y$.  
     
     By convention, any vector $v\in R^X$ is also interpreted as a special case of a matrix in the form of a \emph{\color{blue} column vector} $v\in R^{X\times 1}$, and its transpose is called  \emph{\color{blue} row vector}. 
     
     A matrix $f\in R^{X\times X}$ is called \emph{\color{blue}symmetric} if $\mytr{f}=f$. 
     
     The \emph{\color{blue}product} of two matrices $f\in R^{X\times Y}$ and $g\in R^{Y\times U}$ is the matrix $fg\in R^{X\times U}$ defined  by $(fg)(x,u)=\sum_{y\in Y}f(x,y) g(y,u)$. By abuse of notation, for any finite set $X$, we let $1$ denote the characteristic function of the diagonal $\Delta(X)\subset X\times X$, and call it \emph{\color{blue} identity matrix}.  
     
     Vectors $v,w\in R^X$ are called \emph{\color{blue} orthogonal} if $\mytr{v}w=0$.
     
     Two matrices $f\in R^{X\times Y}$ and $g\in R^{U\times V}$ are called \emph{\color{blue}equivalent} if there exist  bijections $p\colon X\to U$ and $q\colon Y\to V$ such that $f(x,y)=g(p(x),q(y))$ for all $(x,y)\in X\times Y$.  For any matrix $f$, there exists a  unique pair $(m,n)$ of integers such that $f$ is equivalent to at least one explicit matrix indexed by $m\times n$. The latter is called the \emph{\color{blue} size} of $f$.  In the case of vectors, the sizes $n\times 1$ and $1\times n$ are called \emph{\color{blue} length} $n$. 
     \subsection{The direct sum of vectors and matrices}
     Given two explicit vectors $u\in R^m$ and $v\in R^n$. Their direct sum is the vector $u\oplus v\in R^{m+n}$ defined by
     \begin{equation}
(u\oplus v)_{i}=
\left\{
\begin{array}{cl}
  u_{i}  & \text{if}\quad i\in m,  \\
  v_{i-m}  &   \text{if}\quad i-m\in n.
\end{array}
\right.
\end{equation}
Likewise for matrices. Given two explicit matrices $A\in R^{k\times l}$ and $B\in R^{m\times n}$. Their direct sum is the  matrix $A\boxplus B\in R^{(k+m)\times (l+n)}$ defined by
     \begin{equation}
(A\boxplus B)_{i,j}=
\left\{
\begin{array}{cl}
  A_{i,j}  & \text{if}\quad (i,j)\in k\times l,  \\
  B_{i-k,j-l}  &   \text{if}\quad (i-k,j-l)\in m\times n,  \\
 0 &   \text{otherwise}.
\end{array}
\right.
\end{equation}
Notice that the direct sum of two vectors of lengths $m$ and $n$ is a vector of length $m+n$, while their direct sum as column vectors is a matrix of size $(m+n)\times 2$ which is not a column vector.
     \subsection{Pushing forward vectors and matrices}
For finite sets $X,Y$,  let $p\colon X\to Y$  be a map. As any maps, vectors can be pulled back by $p$:
\begin{equation}
p^*\colon R^Y\to R^X,\quad p^*(v)=v\circ p.
\end{equation}
 Due to finiteness of underlying sets, vectors can also be pushed forward. The \emph{\color{blue} push-forward} of a vector $v\in R^{X}$ by $p$ is the vector $p_*(v)\in R^{Y}$ defined by the formula
 \begin{equation}
p_*(v)(y):=\sum_{x\in p^{-1}(y)}v(x),\quad \forall y\in Y.
\end{equation}
One has the following relation between the pull-back $p^*$ and the push-forward $p_*$.
\begin{lemma}
  Let $p\colon X\to Y$ be a map of finite sets. Then, 
  \begin{equation}
\mytr{p_*(v)}u=\mytr{v}p^*(u),\quad \forall (v,u)\in R^X\times R^Y.
\end{equation}
\end{lemma}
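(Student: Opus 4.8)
The plan is to simply expand both sides as iterated sums over the finite sets involved and match them term by term, using that the fibers of $p$ partition $X$. First I would unwind the left-hand side: since for any vector $w\in R^Y$ one has $\mytr{w}u=\sum_{y\in Y}w(y)u(y)$, and by definition $p_*(v)(y)=\sum_{x\in p^{-1}(y)}v(x)$, we get
\begin{equation}
\mytr{p_*(v)}u=\sum_{y\in Y}\Bigl(\sum_{x\in p^{-1}(y)}v(x)\Bigr)u(y)=\sum_{y\in Y}\sum_{x\in p^{-1}(y)}v(x)u(y).
\end{equation}
The inner summand may be rewritten as $v(x)u(p(x))$, because $x\in p^{-1}(y)$ forces $p(x)=y$; all sums are finite, so no convergence issue arises and the rearrangement is legitimate.

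Next I would observe that the family $\{p^{-1}(y)\}_{y\in Y}$ is a partition of $X$ (some blocks possibly empty), so the double sum $\sum_{y\in Y}\sum_{x\in p^{-1}(y)}$ collapses to a single sum $\sum_{x\in X}$. This yields
\begin{equation}
\mytr{p_*(v)}u=\sum_{x\in X}v(x)u(p(x)).
\end{equation}
Finally, recognizing $u(p(x))=(u\circ p)(x)=p^*(u)(x)$ by the definition of the pull-back, the right-hand side is precisely $\sum_{x\in X}v(x)p^*(u)(x)=\mytr{v}p^*(u)$, which completes the argument.

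There is essentially no hard step here: the statement is the finite-set adjunction between $p_*$ and $p^*$, and the only thing to be careful about is that finiteness of $X$ and $Y$ guarantees all the sums are finite and that reindexing along the fiber partition is unconditionally valid. If one wanted to be maximally terse, one could alternatively phrase the whole computation as a single chain of equalities without introducing intermediate displays, but I would keep the fiber-partition step explicit since that is the one place where the hypothesis on $p$ (an arbitrary map, not necessarily injective or surjective) actually enters.
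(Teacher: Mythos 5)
Your proposal is correct and is essentially the same computation as the paper's: both expand $\mytr{p_*(v)}u$ as a double sum over fibers and collapse it to $\sum_{x\in X}v(x)u(p(x))=\mytr{v}p^*(u)$, the paper merely phrasing the collapse via the identity $\chi_{p^{-1}(y)}(x)=\delta_{y,p(x)}$ while you invoke the fiber partition of $X$ directly. No gap; nothing further needed.
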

\begin{proof}
 For any vectors $u\in R^Y$ and $v\in R^X$,  we have
\begin{multline}
\mytr{p_*(v)}u=\sum_{x\in X,y\in Y} \chi_{ p^{-1}(y)}(x)v(x)u(y)=\sum_{x\in X,y\in Y} \chi_{ \{y\}}(p(x))v(x)u(y)\\
=\sum_{x\in X,y\in Y}\delta_{y,p(x)}v(x)w(y)=\sum_{x\in X} v(x)u(p(x))=\sum_{x\in X} v(x)p^*(u)(x)=\mytr{v}p^*(u).
\end{multline}
\end{proof}
The \emph{\color{blue} push-forward} of a matrix $f\in R^{X\times Y}$ by a pair of maps  $p\colon X\to U$ and $q\colon Y\to V$
is the push-forward $(p\times q)_*(f)\in R^{U\times V}$. If $f$ is symmetric, and $p=q$, then $(p\times p)_*(f)$ is also symmetric.  It is denoted $p_*(f)$ and simply called  the push-forward of $f$ by $p$.

For a map between finite sets $p\colon X\to Y$, a vector $w\in R^X$ is called \emph{\color{blue} $p$-compatible} if $p^*(p_*(w))=w$. We remark that any vector $w\in R^X$ is $p$-compatible if $p$ is injective.

For any $p$-compatible $w$ and any matrix $A$ indexed by $Z\times X$, one has
\begin{equation}\label{proppf}
Aw=(\operatorname{id}_Z\times p)_*(A)p_*(w).
\end{equation}
 Indeed, for any $z\in Z$, we have 
\begin{multline}
(Aw)(z)=\mytr{\left(\chi_{\{z\}}\right)}Aw=\mytr{\left(\chi_{\{z\}}\right)}Ap^*(p_*(w))=\mytr{p_*(w)}p_*(\mytr{A}\chi_{\{z\}})\\
=\mytr{p_*(w)}(p\times \operatorname{id}_Z)_*(\mytr{A})\chi_{\{z\}}=\mytr{p_*(w)}\mytr{\left((\operatorname{id}_Z\times p)_*(A)\right)}\chi_{\{z\}}\\
=
\mytr{\left(\chi_{\{z\}}\right)}(\operatorname{id}_Z\times p)_*(A)p_*(w)=\left((\operatorname{id}_Z\times p)_*(A)p_*(w)\right)(z).
\end{multline}

\begin{lemma}\label{3-equiv-cond}
 Let $p\colon X\to Y$ be a map of finite sets and $u$, a vector indexed by $Y$. Then, the following two conditions are equivalent, and they imply that $p^*(u)$ is $p$-compatible.
\begin{itemize}
 \item[(i)] One has the equality $u=p_*(p^*(u))$. 
  \item[(ii)]   For any $y\in Y$, one has $ u(y)(|p^{-1}(y)|-1)=0$.
\end{itemize}
Moreover, if $p$ is surjective, then $p$-compatibility of $p^*(u)$ implies above conditions.
\end{lemma}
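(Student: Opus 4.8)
The plan is to reduce the whole statement to the single identity
\[
p_*\bigl(p^*(u)\bigr)(y)=|p^{-1}(y)|\,u(y),\qquad y\in Y,
\]
which is immediate from the definitions of pull-back and push-forward: $p_*(p^*(u))(y)=\sum_{x\in p^{-1}(y)}p^*(u)(x)=\sum_{x\in p^{-1}(y)}u(p(x))=\sum_{x\in p^{-1}(y)}u(y)=|p^{-1}(y)|\,u(y)$. Granting this, condition (i) says exactly that $u(y)=|p^{-1}(y)|\,u(y)$ for all $y\in Y$, i.e.\ $u(y)\bigl(|p^{-1}(y)|-1\bigr)=0$ for all $y\in Y$, which is condition (ii); hence the equivalence (i)$\iff$(ii) requires nothing further.

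For the claim that (i)/(ii) implies $p$-compatibility of $w:=p^*(u)$, I would simply apply $p^*$ to the equality $p_*(p^*(u))=u$ furnished by (i): this yields $p^*\bigl(p_*(p^*(u))\bigr)=p^*(u)$, which is precisely the defining condition $p^*(p_*(w))=w$ for $w$ to be $p$-compatible. No surjectivity is needed here.

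For the partial converse, assume $p$ is surjective and $p^*(u)$ is $p$-compatible. Then for every $x\in X$, using the displayed identity once more,
\[
u(p(x))=p^*(u)(x)=p^*\bigl(p_*(p^*(u))\bigr)(x)=p_*\bigl(p^*(u)\bigr)(p(x))=|p^{-1}(p(x))|\,u(p(x)).
\]
Since $p$ is onto, the values $p(x)$ exhaust $Y$ as $x$ ranges over $X$, so $u(y)=|p^{-1}(y)|\,u(y)$ for every $y\in Y$, which is (i).

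The one point worth flagging is where surjectivity is genuinely used: the $p$-compatibility equation only constrains $u(y)$ for $y$ in the image $p(X)$, forcing $u(y)\bigl(|p^{-1}(y)|-1\bigr)=0$ there, whereas (ii) demands this over all of $Y$; for $y\in Y\setminus p(X)$ one has $|p^{-1}(y)|-1=-1$, so (ii) would force $u(y)=0$, a condition invisible to $p$-compatibility. Hence surjectivity cannot be dropped in the last assertion. Apart from this remark, the proof is a routine unwinding of definitions and presents no real obstacle.
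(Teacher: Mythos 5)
Your proof is correct and follows essentially the same route as the paper: the identity $p_*(p^*(u))(y)=|p^{-1}(y)|\,u(y)$ drives the equivalence of (i) and (ii), applying $p^*$ to (i) gives $p$-compatibility, and for the partial converse you unwind pointwise the same fact the paper states abstractly (that $p^*$ is injective when $p$ is surjective). One cosmetic slip: at the end of the converse argument what you have derived is literally condition (ii), not (i), but this is harmless since the two have already been shown equivalent.
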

\begin{proof}
  ((i)$\Leftrightarrow $(ii))
For any vector $u$ indexed by $Y$, and any $y\in Y$, we have
\begin{equation}
p_*(p^*(u))(y)=\sum_{x\in X}\chi_{p^{-1}(y)}(x)u(p(x))=u(y)|p^{-1}(y)|
\end{equation}
so that 
\begin{equation}
p_*(p^*(u))=u\Leftrightarrow u(y)(|p^{-1}(y)|-1)=0,\quad \forall y\in Y.
\end{equation}

Denoting $w:=p^*(u)$, we have $ p^*(p_*(w))=p^*(p_*(p^*(u)))=p^*(u)=w$, i.e. $w$ is $p$-compatible.

Now, assuming that $p$ is surjective and $w:=p^*(u)$ is $p$-compatible, define $v:=p_*(w)$. Then, $p^*(v)=p^*(p_*(w))=w=p^*(u)$, that is $p^*(u-v)=0$. As the pull-back induced by a surjective map is injective, we conclude that $v=u$ that is $p_*(p^*(u)=u$.

\end{proof}

 \subsection{The groups of symmetric matrices}
 For each non-negative integer $n$, we let  $\mysms{n}{R}$ denote  the subgroup of all symmetric  matrices in $R^{n\times n}$ and define their direct sum 
 \begin{equation}
\mysm{R}:=\bigoplus_{n\in\Z_{\ge0}}\mysms{n}{R}.
\end{equation}
 Remark that $\mysms{0}{R}=\{[]\}$ where $\myem$
 is called  the \emph{\color{blue} empty matrix}. 
 \subsection{Canonical matrix bases} Explicit matrices  are canonically written in the form of two-dimensional tables of coefficients by using the canonical linear order in the set of integers and the canonical linear basis of $R^n$ given by the characteristic functions of the singletons $\chi_{\{i\}}$, $i\in n$. Interpreting the latter as column vectors, the products $E_{i,j}:=\chi_{\{i\}}\mytr{(\chi_{\{j\}})}$ for $(i,j)\in m\times n$, form the canonical linear basis of $R^{m\times n}$.
  \subsection{A pairing $4\times 4\to 3$} For any $(i,j)\in 4\times 4$, we define  a (symmetric) pairing $i\cdot j\in 3$ by the formula
 \begin{equation}
i\cdot j:=\mytr{v_i}v_j
\end{equation}
where 
\begin{equation}
v\colon 4\to \C^3, \quad v_{k+2l}=\chi_{\{2\}}+\left(\sqrt{-1}\right)^{k+(1-k)2l}\chi_{\{k\}},\quad k,l\in 2,
\end{equation}
or in the form of an explicit matrix
\begin{equation}
\left[i\cdot j\right]_{i,j\in 4}=\begin{bmatrix}
 2&1&0&1\\
 1&0&1&0\\
 0&1&2&1\\
 1&0&1&0
\end{bmatrix}.
\end{equation}
\subsection{The $SR$-equivalence relation}
 In the space of symmetric matrices, the $S$-equivalence  was introduced by  Trotter~\cite{MR0143201} to construct the signature and nullity invariants of knots, and by Murasugi~\cite{MR0171275} for their extensions to links, and further extensions to link signature functions by Tristram~\cite{MR0248854} and Levine~\cite{MR0253348}. 
 
 For our purposes, we define the \emph{\color{blue}$SR$-equivalence} relation in the space of pairs $(m, A)$, where $m$ is an element of the additive group $\Z^2$ and $A\in \mysm{R}$,  to be generated by the congruences $(m,A)\sim (m, \mytr{M}AM)$ for all invertible matrices $M$ of the same size  as that of $A$ (but not necessarily symmetric) and the modified $S$-equivalences $\left(m, 
\begin{bmatrix}
  0&1\\ 1&r
\end{bmatrix}\boxplus A
\right)\sim\left(m+1,A\right)$ for all $r\in R$, and $\left(m,[\epsilon]\boxplus A \right)\sim\left(m+\frac{1+\epsilon-2\epsilon\chi_{\{1\}}}{1+\epsilon^2},A\right)$ for all $\epsilon \in \{1,-1,0\}$. 
\begin{lemma}\label{mainlem}
For a ring $R$ and a positive integer $n$, let  $\zeta\in R$ be  an invertible element, $A\in \mysms{n}{R}$,  $v\in R^n$, and $r\in R$. Then, the matrix
 \begin{equation}
B=\begin{bmatrix}
0&\zeta&\begin{matrix}0&\cdots&0\end{matrix}\\
\zeta &r&\mytr{v}\\
\begin{matrix}0\\ \vdots\\ 0\end{matrix}&v& A
\end{bmatrix}\in \mysms{n+2}{R}
\end{equation}
is congruent to $\begin{bmatrix}
 0&\zeta\\\zeta&r
\end{bmatrix}\boxplus  A$. 
\end{lemma}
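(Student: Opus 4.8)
\section*{Proof proposal}

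The plan is to exhibit an explicit invertible matrix $M$ realizing the asserted congruence by the standard pivoting trick: although the diagonal entry $B_{0,0}=0$ cannot serve as a pivot to eliminate the vector $v$, the off-diagonal entry $\zeta$ can, and this is exactly where invertibility of $\zeta$ is used. Concretely, I would index $B\in\mysms{n+2}{R}$ by $n+2$ according to the $1+1+n$ block structure displayed in the statement (row/column $0$ carrying the $0$ and the $\zeta$, row/column $1$ carrying $r$ and $\mytr{v}$, and rows/columns $2,\dots,n+1$ carrying $A$, so that $B_{1,j+2}=v_j$ and $B_{i+2,j+2}=A_{i,j}$ for $i,j\in n$), and set
\[
M:=1-\zeta^{-1}\sum_{j\in n}v_jE_{0,j+2}=\begin{bmatrix}1&0&-\zeta^{-1}\mytr{v}\\0&1&0\\0&0&1\end{bmatrix},
\]
where the last $1$ is the $n\times n$ identity matrix.

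First I would check that $M$ is invertible over $R$. Since $E_{0,j+2}E_{0,k+2}=\chi_{\{0\}}\mytr{(\chi_{\{j+2\}})}\chi_{\{0\}}\mytr{(\chi_{\{k+2\}})}=\delta_{j+2,0}E_{0,k+2}=0$ for all $j,k\in n$, the matrix $N:=M-1$ satisfies $N^2=0$, hence $M^{-1}=1+\zeta^{-1}\sum_{j\in n}v_jE_{0,j+2}$ exists. Therefore $B$ is congruent to $\mytr{M}BM$, and it only remains to identify the latter.

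This last step is a direct block computation. Right multiplication by $M$ adds $-\zeta^{-1}v_j$ times column $0$ of $B$ to column $j+2$, for each $j\in n$; as column $0$ of $B$ equals $\zeta\chi_{\{1\}}$, this replaces the $(1,j+2)$-entry $v_j$ by $v_j-\zeta^{-1}v_j\zeta=0$ while leaving the $A$-block and the top-left $2\times 2$ corner untouched, so that
\[
BM=\begin{bmatrix}0&\zeta&0\\ \zeta&r&0\\ 0&v&A\end{bmatrix}.
\]
Left multiplication by $\mytr{M}$ performs the symmetric row operation, which removes the remaining copy of $v$; no new entry is produced, because the would-be cross term is proportional to $B_{0,0}=0$. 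Hence
\[
\mytr{M}BM=\begin{bmatrix}0&\zeta&0\\ \zeta&r&0\\ 0&0&A\end{bmatrix}=\begin{bmatrix}0&\zeta\\ \zeta&r\end{bmatrix}\boxplus A,
\]
which is the claim. There is no real obstacle in this argument; the only care needed is to keep the $1+1+n$ block bookkeeping straight, so as to confirm that neither the $A$-block nor the top-left $2\times 2$ corner is disturbed by the two operations.
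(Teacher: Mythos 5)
Your proof is correct and is essentially the paper's own argument: your $M$ is exactly the paper's $1-w\mytr{u}$ with $u=\zeta^{-1}\sum_i v_i\chi_{\{i\}}$ and $w=\chi_{\{0\}}$, and both proofs justify invertibility by the same nilpotency/orthogonality observation before clearing the $v$-block by pivoting on the off-diagonal $\zeta$. No further comment is needed.
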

\begin{proof}
 We have
 \begin{equation}
(1-u\mytr{w})B(1-w\mytr{u})=
\begin{bmatrix}
 0&\zeta\\\zeta&r
\end{bmatrix}\boxplus  A
\end{equation}
where $u,w\in R^{n+2}$ are defined by 
\begin{equation}
u:=\zeta^{-1}\sum_{i=2}^{n+1}v_i\chi_{\{i\}},\quad w:=\chi_{\{0\}},
\end{equation}
and the matrix $1-u\mytr{w}$ is invertible with inverse $1+u\mytr{w}$ due to the orthogonality relation $\mytr{w}u=0$.
\end{proof}
\begin{lemma}\label{lemma-about-pf}
For two positive integers $m$ and $n$ and a map $f\colon m\to n$, let $w\in R^m$ be  a $f$-compatible vector. Then, for any $A\in \mysms{m}{R}$ and $v\in R^m$, one has
\begin{equation}
f_*\left(\left(1+v\mytr{w}\right)A\left(1+w\mytr{v}\right)\right)=\left(1+\tilde v\mytr{\tilde w}\right)f_*(A)\left(1+\tilde w\mytr{\tilde v}\right).
\end{equation}
where we denote $\tilde u:=f_*(u)$ for $u\in\{v,w\}$.
\begin{proof} By repeatedly using formula~\eqref{proppf}, we have
\begin{multline}
 f_*\left(\left(1+v\mytr{w}\right)A\left(1+w\mytr{v}\right)\right)=f_*\left(\left(A+v\mytr{w}A\right)\left(1+w\mytr{v}\right)\right)\\
=f_*\left(A\right)+f_*\left(v\mytr{w}A\right)+ f_*\left(Aw\mytr{v}\right)+f_*\left(v\mytr{w}Aw\mytr{v}\right)\\
=f_*\left(A\right)+\tilde v\mytr{\left(f_*\left(Aw\right)\right)}+ f_*\left(Aw\right)\mytr{\tilde v}+ \tilde v\mytr{w}Aw\mytr{ \tilde v}\\
=f_*\left(A\right)+\tilde v\mytr{\left(f_*\left(A\right)\tilde w\right)}+ f_*\left(A\right)\tilde w\mytr{\tilde v}+ \tilde v\mytr{\tilde w} f_*\left(A\right)\tilde w\mytr{ \tilde v}\\=\left(1+\tilde v\mytr{\tilde w}\right)f_*(A)\left(1+\tilde w\mytr{\tilde v}\right).
\end{multline}
\end{proof}

\end{lemma}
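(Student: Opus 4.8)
The plan is to expand the conjugated matrix, apply the push-forward term by term using its additivity, and match each resulting summand against a piece of the right-hand side. Writing out the conjugation gives
\[
\left(1+v\mytr{w}\right)A\left(1+w\mytr{v}\right)=A+v\mytr{w}A+Aw\mytr{v}+\left(\mytr{w}Aw\right)v\mytr{v},
\]
in which $\mytr{w}Aw\in R$ is a scalar. Since $(f\times f)_*$ is additive on matrices, it suffices to understand $f_*$ on each of these four summands.

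Two elementary facts would do the job. First, for a column vector $a\in R^m$ and a row vector $\mytr{b}$ with $b\in R^m$, the defining double sum for the push-forward of the rank-one matrix $a\mytr{b}$ factors, so that $f_*(a\mytr{b})=f_*(a)\,\mytr{(f_*(b))}$; in particular $f_*$ commutes with transposition on such matrices. Second, and this is where the $f$-compatibility of $w$ enters, one has $f_*(Aw)=f_*(A)\,\tilde w$: indeed \eqref{proppf} with $Z=m$ and $p=f$ gives $Aw=(\id_m\times f)_*(A)\,\tilde w$, and pushing the remaining $m$-index forward commutes with right multiplication by the fixed vector $\tilde w$ (just by interchanging the order of summation), so $f_*(Aw)=(f\times f)_*(A)\,\tilde w=f_*(A)\,\tilde w$. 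Note that no hypothesis on $v$ is needed, since $v$ only ever occurs in the outermost slot of a rank-one factor.

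Granting these, the bookkeeping is routine: $f_*(A)$ reproduces the first term; $f_*(Aw\mytr{v})=f_*(A)\tilde w\mytr{\tilde v}$; using $\mytr{A}=A$ together with the fact that $f_*$ commutes with transpose, $f_*(v\mytr{w}A)=\tilde v\,\mytr{(f_*(A)\tilde w)}$; and for the scalar coefficient one checks that $\mytr{w}Aw=\mytr{\tilde w}f_*(A)\tilde w$, which follows from the identity $f_*(Aw)=f_*(A)\tilde w$ together with the adjunction $\mytr{f_*(a)}b=\mytr{a}f^*(b)$ and the defining relation $f^*(\tilde w)=w$, so that $f_*\bigl((\mytr{w}Aw)v\mytr{v}\bigr)=\tilde v\,\mytr{\tilde w}f_*(A)\tilde w\,\mytr{\tilde v}$. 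Summing the four contributions reassembles $\left(1+\tilde v\mytr{\tilde w}\right)f_*(A)\left(1+\tilde w\mytr{\tilde v}\right)$. The main obstacle is not computational but conceptual: pinning down exactly where $f$-compatibility is needed. It is precisely what lets one slide $f_*$ past right multiplication by $w$ — turning $f_*(Aw)$ into $f_*(A)\tilde w$, and the scalar $\mytr{w}Aw$ into $\mytr{\tilde w}f_*(A)\tilde w$ — and for a general $w$ these equalities genuinely fail.
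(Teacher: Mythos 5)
Your proposal is correct and follows essentially the same route as the paper: expand the congruence into four terms, push forward term by term, and use formula~\eqref{proppf} together with the $f$-compatibility of $w$ to convert $f_*(Aw)$ into $f_*(A)\tilde w$ (and hence the scalar $\mytr{w}Aw$ into $\mytr{\tilde w}f_*(A)\tilde w$). You make explicit two points the paper leaves implicit — the factorization $f_*(a\mytr{b})=f_*(a)\mytr{(f_*(b))}$ for rank-one matrices and the interchange of summation behind $f_*(Aw)=(f\times f)_*(A)\tilde w$ — which is a harmless and accurate elaboration.
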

\section{Construction of a link invariant}\label{sec2}

\subsection{Amplitude of a link diagram}
Let $D$ be an oriented link diagram. We let $\myc{D}$ and $\myr{D}$ denote the sets of all crossings and regions of $D$, respectively. Any $c\in\myc{D}$ is also  identified with a map 
$
c\colon 4\to \myr{D}
$
enumerating the regions around $c$ according to Fig.~\ref{regs-ar-cr}.
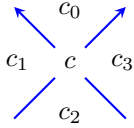
\begin{figure}[h]
\begin{center}
\begin{tikzpicture}[baseline,scale=.7]
\node (c) at (0,0) {$c$};
\node (nw) at (180:1){$c_1$};\node (ne) at (90:1){$c_0$};
\node (se) at (0:1){$c_3$};\node (sw) at (-90:1){$c_2$};

 \draw[ thick, >=stealth,->,color=blue](-135:1.5)--(c)--(45:1.5);
 \draw[ thick, >=stealth,->,color=blue](-45:1.5)--(c)--(135:1.5);
 
\end{tikzpicture}
\end{center}
\caption{The regions around a crossing enumerated by the elements of the ordinal number $4$.}\label{regs-ar-cr}
\end{figure}
The \emph{\color{blue}amplitude} of $D$
is a matrix $\mym{D}\in\Z[x]^{\myr{D}\times\myr{D}}$ defined by the formula
\begin{equation}
\mym{D}:=\sum_{c\in \myc{D}}\mym{c},\quad 
\mym{c}:=\operatorname{sgn}(c)\sum_{(i,j)\in 4\times 4}h_{i\cdot j}\chi_{\{(c_i,c_j)\}},
\end{equation}
where $\operatorname{sgn}(c)\in\{\pm1\}$ is the sign of the crossing $c$, and
\begin{equation}
h_0:=1,\quad h_1:=x,\quad h_2:=2x^2-1
\end{equation}
are the first three Hermite polynomials normalised by removing all integer overall multiplicative factors.
\begin{remark}
 It is easily seen that for any oriented link diagram $D$, its amplitude is a symmetric  matrix which finds itself  in the subgroup $\Z[2x]^{\myr{D}\times\myr{D}}\subset \Z[x]^{\myr{D}\times\myr{D}}$. Nonetheless, eventually it will be important for us to consider  $\mym{D}$ as an element of even larger group $\Z[x,\frac1{2x+1}]^{\myr{D}\times\myr{D}}$.
\end{remark}
\begin{remark}
 The ring automorphism of $\Z[x]$ trivially acting on the integers and changing the sign of  the indeterminate $x$ gives rise to amplitudes congruent to the original ones through the sign changing diagonal matrices with the negative entries associated with regions related to each other in the checkerboard manner. That means that in realisations one can always use some preferred choice between $\pm x$, for example, $x\ge 0$ if $x\in \R$.
\end{remark}

 \subsection{The main result}
\begin{theorem}\label{thm-sr-equiv}
 Let $D$ be an oriented link diagram with $\myc{+}$ (respectively $\myc{-}$) positive (respectively negative) crossings  and $R:=\Z[x,\frac1{2x+1}]$ (or any quotient thereof embedded into an arbitrary ring). Then, the $SR$-equivalence class of the pair $(-m_D,\mym{D})$, where $(m_{D})_{\frac{1\mp1}2}=\myc{\pm}$, is constant on the Reidemeister equivalence class of $D$.
\end{theorem}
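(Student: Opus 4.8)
The plan is to verify that the pair $(-m_D,\mym{D})$, as an element of the $SR$-equivalence class, is unchanged under each of the three Reidemeister moves, since these generate Reidemeister equivalence of oriented diagrams. Because $\mym{D}=\sum_{c}\mym{c}$ is a sum over crossings, a local modification of $D$ inside a disk affects only those terms $\mym{c}$ with $c$ inside the disk, together with the relabelling of the regions whose boundaries are altered. So for each move I would isolate the finitely many crossings involved, write the corresponding block of $\mym{D}$ explicitly using the pairing table $[i\cdot j]$, and check that the two resulting pairs are related by the generators of the $SR$-equivalence: congruence by an invertible (not necessarily symmetric) matrix, the modified $S$-move stripping a hyperbolic $\begin{bmatrix}0&1\\1&r\end{bmatrix}$ summand while shifting $m$ by $1$, and the three $[\epsilon]$-moves for $\epsilon\in\{1,-1,0\}$ adjusting $m$ by the indicated rational.

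First I would handle Reidemeister I. Adding a positive (resp.\ negative) kink introduces one new crossing $c$ with $\sgn(c)=+1$ (resp.\ $-1$) and one new region; two of the four regions around $c$ coincide, say $c_0=c_2$ (the two ``outer'' regions), and one, say $c_1$, is the new small region. Pushing forward along the map that identifies $c_0$ with $c_2$ — using Lemma~\ref{lemma-about-pf} and the $p$-compatibility machinery to commute the push-forward past the other crossing contributions — collapses the new block of $\mym{c}$ to a $2\times2$ or $1\times1$ piece. Reading off the pairing table, the new diagonal entry $c_1\cdot c_1$ gives an $h_2=2x^2-1$ contribution and the off-diagonal $c_0\cdot c_1=c_3\cdot c_1$ pairings contribute $h_1=x$; after a congruence normalising the invertible $2x+1$ (this is exactly why $R$ must contain $\frac1{2x+1}$) one is left with a $[\epsilon]\boxplus\mym{D'}$ shape for the appropriate $\epsilon\in\{\pm1\}$, and the $[\epsilon]$-move then corrects $m$ so that the shift matches the change $\myc{\pm}\mapsto\myc{\pm}+1$ in $m_D$. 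The $\epsilon=0$ case of the $[\epsilon]$-move is presumably needed for a degenerate configuration; I would locate which kink produces it.

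For Reidemeister II, a positive and a negative crossing $c,c'$ are created (so $m_D$ is unchanged, matching the claim), together with two new regions sandwiched between the two strands. Here the relevant $4\times4{+}4\times4$ block of $\mym{c}+\mym{c'}$, after the region identifications forced by the move, should be congruent via Lemma~\ref{mainlem} (with $\zeta$ an appropriate unit — again a power of $2x+1$ up to sign) to $\begin{bmatrix}0&\zeta\\\zeta&r\end{bmatrix}\boxplus(\text{rest})$, and then a preliminary congruence rescaling $\zeta$ to $1$ followed by the modified $S$-move removes the block; the two $S$-moves' $+1$ shifts to $m$ cancel against each other — or rather, since we track $(-m_D,\cdot)$ and $m_D$ is unchanged, they must cancel, which is a consistency check on the signs. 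Reidemeister III is the most delicate: no crossings or regions are created or destroyed, and $m_D$ is literally unchanged, so I must show the two $6\times6$ crossing-blocks (three crossings on each side, with the central region differing) give congruent amplitudes after the obvious bijection of regions — equivalently that a single explicit invertible matrix $M$ over $R$ conjugates one to the other. I expect the main obstacle to be exactly this R3 computation: unlike R1 and R2 it is not a ``stripping'' move, so one cannot hide behind Lemma~\ref{mainlem} or Lemma~\ref{lemma-about-pf}; one must exhibit the congruence explicitly, presumably by writing both $6\times6$ matrices in terms of the pairing table and the Hermite values $1,x,2x^2-1$ and finding $M$ by hand (or recognising it as coming from a change of basis in the underlying IRF/Burau-type model). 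A secondary subtlety throughout is bookkeeping the $\frac1{2x+1}$ denominators and the exact rational shifts in $m$ so that the three $[\epsilon]$-cases and the $S$-shifts add up correctly to the stated $(m_D)_{\frac{1\mp1}2}=\myc{\pm}$.
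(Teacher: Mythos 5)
Your overall strategy --- checking invariance move by move and invoking Lemma~\ref{mainlem}, Lemma~\ref{lemma-about-pf} and the generators of the $SR$-equivalence --- is the same as the paper's, but the proposal stops short of the computations that actually constitute the proof, and several of the guesses about how the pieces fit together are wrong. The central missing item is the one you flag yourself: the RIII congruence. The paper writes out the two local amplitudes $\mym{L_3}$ and $\mym{R_3}$ as explicit $7\times 7$ matrices (there are seven local regions, not six) and exhibits $\mym{L_3}=(1+v\mytr{w})\mym{R_3}(1+w\mytr{v})$ with $w=\chi_{\{0\}}$ and $v=(2x+1)^{-1}\sum_{i=1}^{6}(-1)^{i-1}\chi_{\{i\}}$; this is the \emph{only} place where the localisation at $2x+1$ enters. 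Your attribution of the $\frac{1}{2x+1}$ denominators to RI and RII is therefore misplaced: in the paper's RI computation the congruence vector is $v=\chi_{\{1\}}+2x\chi_{\{2\}}$ with no denominators, and in RII the off-diagonal unit $\zeta$ of Lemma~\ref{mainlem} is $\pm1$, not a power of $2x+1$.

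There are also concrete bookkeeping and case-splitting gaps. Under RII the vector $m_D\in\Z^2$ is \emph{not} unchanged: both $\myc{+}$ and $\myc{-}$ increase by one, so $m_D=m_{D'}+(1,1)$, and exactly one modified $S$-move (whose shift is the all-ones vector) accounts for this; there are no ``two cancelling shifts.'' More seriously, you do not address the degenerate configurations in which some local regions coincide in the ambient diagram. For RII the congruence $(1+E_{2,1})\mym{L_{2.1}}(1+E_{1,2})$ uses $w=\chi_{\{1\}}$, which by Lemma~\ref{3-equiv-cond} is not compatible with the map identifying regions $1$ and $2$; the paper must treat that configuration separately (the diagram $L_{2.2}$, whose local amplitude vanishes identically, which is exactly where the $\epsilon=0$ move is used --- its shift $\frac{1+\epsilon-2\epsilon\chi_{\{1\}}}{1+\epsilon^2}=1$ is again the all-ones vector, matching the removal of one positive and one negative crossing; your guess that $\epsilon=0$ comes from a kink is wrong). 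Finally, for RIII, contrary to your remark that one ``cannot hide behind Lemma~\ref{lemma-about-pf},'' that lemma is precisely what is used to push the $7\times7$ congruence forward to diagrams in which outer regions are identified, the compatibility of $w=\chi_{\{0\}}$ being guaranteed whenever region $0$ is not identified with any other. Until the RIII congruence is produced and these identification cases are handled, the proposal remains a plan rather than a proof.
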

\begin{proof} (RIII)  We start by analysing the Reidemeister moves involving only positive crossings. The cases with negative crossings are controlled by combining with moves of type II. Let us consider the graphs
 \begin{equation}\label{reiIII}
L_3:=\quad
\begin{tikzpicture}[baseline=41,scale=1.5]
 \coordinate (s0)  at (0,0);
 \coordinate (s1)  at (1,0);
 \coordinate (s2)  at (2,0);
 \coordinate (n0)  at (0,2);
 \coordinate (n1)  at (1,2);
 \coordinate (n3)  at (2,2);

\draw[->,>=stealth, 
thick,
color=blue] (s2) 
--(n0);
\draw[line width=5,color=white] (s1) to [out=150,in=-150] 
(n1);
\draw[->,>=stealth, 
thick,
color=blue](s1) to [out=150,in=-150] 
(n1);

\draw[line width=5,color=white] (s0)
--(n3);
\draw[->,>=stealth, 
thick,
color=blue] (s0) 
--(n3);
\node[left] at (.8,1) {$0$};
\node[right] at (0,1) {$1$};
\node[below] at (.5,2) {$2$};
\node[below] at (1.5,2) {$3$};
\node[left] at (2,1) {$4$};
\node[above] at (1.5,0) {$5$};
\node[above] at (.5,0) {$6$};
\end{tikzpicture},\qquad
R_3:=\quad
\begin{tikzpicture}[baseline=41,scale=1.5]
 \coordinate (s0)  at (0,0);
 \coordinate (s1)  at (1,0);
 \coordinate (s2)  at (2,0);
 \coordinate (n0)  at (0,2);
 \coordinate (n1)  at (1,2);
 \coordinate (n3)  at (2,2);

\draw[->,>=stealth, 
thick,
color=blue] (s2) 
--(n0);
\draw[line width=5,color=white] (s1) to [out=30,in=-30] 
(n1);
\draw[->,>=stealth, 
thick,
color=blue](s1) to [out=30,in=-30] 
(n1);

\draw[line width=5,color=white] (s0)
--(n3);
\draw[->,>=stealth, 
thick,
color=blue] (s0) 
--(n3);
\node[right] at (1.2,1) {$0$};
\node[right] at (0,1) {$1$};
\node[below] at (.5,2) {$2$};
\node[below] at (1.5,2) {$3$};
\node[left] at (2,1) {$4$};
\node[above] at (1.5,0) {$5$};
\node[above] at (.5,0) {$6$};
\end{tikzpicture}
\end{equation}
where the regions of $L_3$ and $R_3$ are in a natural bijection, and we enumerate them by elements of $7$ through a chosen bijection of $7$ with
$\myr{L_3}\simeq\myr{R_3}$. Initially, in thinking of $L_3$ and $R_3$ as parts of link diagrams, we suppose that all regions are pairwise distinct.
The contributions to the amplitudes of link diagrams are described by the explicit matrices
\begin{equation}
\mym{L_3}=
\begin{bmatrix}
 4x^2-1&2x&1&2x&1&2x&1\\
2x &2&x&1&0&1&x\\
 1&x&2x^2-1&x&0&0&0\\
 2x&1&x&2x^2&x&1&0\\
 1&0&0&x&1&x&0\\
2x&1&0&1&x& 2x^2&x\\
 1&x&0&0&0&x&2x^2-1
\end{bmatrix}
\end{equation}
and
\begin{equation}
\mym{R_3}=
\begin{bmatrix}
 4x^2-1&1&2x&1&2x&1&2x\\
1 &1&x&0&0&0&x\\
2 x&x&2x^2&x&1&0&1\\
 1&0&x&2x^2-1&x&0&0\\
 2x&0&1&x&2&x&1\\
1&0&0&0&x& 2x^2-1&x\\
 2x&x&1&0&1&x&2x^2
\end{bmatrix}.
\end{equation}
It is easily verified that these matrices are congruent in the localised ring $R$ through the equality
\begin{equation}\label{reiIII-equiv}
\mym{L_3}=\left(1+v \mytr{w}\right)\mym{R_3}\left(1+w \mytr{v}\right)
\end{equation}
where the column vectors $v,w\in R^{7}$ are of the form
\begin{equation}\label{defs-of v-and-w}
v:=(2x+1)^{-1}\sum_{i=1}^6(-1)^{i-1}\chi_{\{i\}},\quad w:=\chi_{\{0\}}
\end{equation}
thus satisfying the orthogonality relation 
\begin{equation}\label{bigMij}
\mytr{w} v=0
\end{equation}
 which implies that $(1+v \mytr{w})^{-1}=1-v \mytr{w}$. That means that  $\mym{L_3}$ and $\mym{R_3}$ are congruent and so are the amplitudes of any pair of link diagrams $D$ and $D'$ differing only in the parts given by $L_3$ and $R_3$ and when all concerned regions are pairwise distinct. Indeed, if $n\ge 7$ is the number of regions in $D$ and $D'$, then
 \begin{equation}
\mym{D}=\left(\mym{L_3}\boxplus [0]^{\boxplus(n-7)}\right)+\left([0]\boxplus A\right),\quad \mym{D'}=\left(\mym{R_3}\boxplus [0]^{\boxplus (n-7)}\right)+\left([0]\boxplus A\right),
\end{equation}
where $A\in\mysms{n-1}{R}$ is the contribution from the common parts of $D$ and $D'$, and, due to equality~\eqref{reiIII-equiv}, we obtain
\begin{equation}
\mym{D}=\left(1+f_*(v) \mytr{f_*(w)}\right)\mym{D'}\left(1+f_*(w) \mytr{f_*(v)}\right)
\end{equation}
where $f\colon 7\to n$ is the canonical inclusion. As any vector is $f$-compatible, the push-forwards of $v$ and $w$ are still orthogonal so that the matrix $1+f_*(v) \mytr{f_*(w)}$ is invertible. Thus, $\mym{D}$ and $\mym{D'}$ are congruent over the localised ring $R$.

In order to verify the equivalence under all Reidemeister moves of type III involving only positive crossings, we also need to verify the equivalence of the push-forwards of the amplitudes of two link diagrams differing from each other in the parts $L_3$ and $R_3$, where some of the regions in \eqref{reiIII} are identified. Lemma~\ref{lemma-about-pf} allows to push forward equality~\eqref{reiIII-equiv} by an identification map $f$, provided $w$ is $f$-compatible. By Lemma~\ref{3-equiv-cond} and the form of $w$ in \eqref{defs-of v-and-w}, that is the case as soon as $|f^{-1}(f_0)|=1$, and, due to the orthogonality of push-forwards of $v$ and $w$, we conclude that the push-forwards of $\mym{L_3}$ and $\mym{R_3}$ are still congruent, and so are the amplitudes of diagrams containing them and only differing in them.
 That proves that all Reidemeister moves of type III involving only positive crossings give rise to congruent amplitudes.

(RII:1) For the first instance of the Reidemeister move of type II, we consider the graph
\begin{equation}\label{reiII}
L_{2.1}:=
\begin{tikzpicture}[baseline=7.3,scale=1.5,spin/.style={circle,draw=black!50,fill=black!10,thick}]
\coordinate (ne)  at (2,.5);
\coordinate (nw)  at (0,.5);
\coordinate (se)  at (2,0);
\coordinate (sw)  at (0,0);
\node (w) at (0,.25){$1$};\node (n) at (1,.6){$4$};
\node (e) at (2,.25){$2$};\node (s) at (1,-.1){$3$};
\node (c) at (1,.25){$0$};
\draw[ thick, >=stealth,->,color=blue](se) to [out=135,in=45](sw);
\draw[line width=5,color=white](ne) to [out=-135,in=-45](nw);
\draw[thick, >=stealth,->,color=blue](ne) to [out=-135,in=-45](nw);
\end{tikzpicture}
\end{equation}
whose amplitude reads
\begin{equation}
\mym{L_{2.1}}=
\begin{bmatrix}
 0&1&-1&0&0\\
1  &2x^2-1&0&x&x\\ 
 -1 &0&1-2x^2&-x&-x\\ 
  0&x&-x&0&0\\ 
 0 &x&-x&0&0
\end{bmatrix}.
\end{equation}

Any link diagram $D$ with $n\ge5$ regions, containing $L_{2.1}$  with the pairwise distinct regions, has the amplitude of the form
\begin{equation}
\mym{D}=(\mym{L_{2.1}}\boxplus  [0]^{n-5})+([0]\boxplus  A)
\end{equation}
with some $A\in\mysms{n-1}{R}$. Now, the equality
\begin{equation}\label{eq-with-e12}
(1+E_{2,1})\mym{L_{2.1}}(1+E_{1,2})=
\begin{bmatrix}
 0&1&0&0&0\\
1  &2x^2-1&2x^2-1&x&x\\ 
0&2x^2-1&0&0&0\\
0&x&0&0&0\\
0&x&0&0&0
\end{bmatrix}
\end{equation}
and Lemma~\ref{mainlem} imply that $\mym{D}$ is congruent to $\begin{bmatrix}
 0&1\\1&r
\end{bmatrix}\boxplus  f_*(A)$
with some $r\in R$ and where
 \begin{equation}
f\colon n-1\to n-2,\quad i\mapsto i-1+\delta_{0,i},
\end{equation}
is the map which identifies the regions $1$ and $2$ in the diagram $D'$ which is $D$ where the part $L_{2.1}$ is replaced by 
\begin{equation}
R_{2.1}:=
\begin{tikzpicture}[baseline=7.3,scale=1.5,spin/.style={circle,draw=black!50,fill=black!10,thick}]
\coordinate (ne)  at (.5,.5);
\coordinate (nw)  at (0,.5);
\coordinate (se)  at (.5,0);
\coordinate (sw)  at (0,0);
\node (w) at (0.25,.25){$2$};
\node (n) at (.25,.6){$4$};
\node (c) at (0.25,-.1){$3$};
\draw[ thick, >=stealth,->,color=blue](se) to [out=135,in=45](sw);
\draw[thick, >=stealth,->,color=blue](ne) to [out=-135,in=-45](nw);

\end{tikzpicture}\ ,
\end{equation}
so that $f_*(A)=\mym{D'}$, and the role of the push-forward map $f_*$ is to implement the identification of the regions $1$ and $2$ in $\mym{D'}$. Taking into account the differences in the numbers of positive and negative crossings in $L_{2.1}$ and $R_{2.1}$, we have the equality $m_D=1+m_{D'}$, and  thus we obtain the $SR$-equivalence
\begin{equation}\label{SR-eq-for-r211}
(-m_D,\mym{D})\sim \left(-m_{D},\begin{bmatrix}
 0&1\\
1  &r
\end{bmatrix}\boxplus  \mym{D'}\right)\sim \left(1-m_{D},\ \mym{D'}\right)= (-m_{D'},\mym{D'}).
\end{equation}
This equivalence projects down to any further identification of the outer regions of $L_{2.1}$ provided the regions $1$ and $2$ are not identified in $D$. The reason for the latter restriction comes from the matrix $E_{2,1}=v\mytr{w}$ in the congruence transformation~\eqref{eq-with-e12} which is such that, by Lemma~\ref{3-equiv-cond}, the vector $w=\chi_{\{1\}}$ is not compatible with a projection which identifies the regions $1$ and $2$, and thus, Lemma~\ref{lemma-about-pf} cannot be used for pushing forward this congruence relation. This is why we have to analyse this case separately.

(RII:2) We consider the graph
\begin{equation}\label{reiII-212}
L_{2.2}:=
\begin{tikzpicture}[baseline=7.3,scale=1.5,spin/.style={circle,draw=black!50,fill=black!10,thick}]
\coordinate (ne)  at (2,.5);
\coordinate (nw)  at (0,.5);
\coordinate (se)  at (2,0);
\coordinate (sw)  at (0,0);
\node (w) at (0,.25){$1$};\node (n) at (1,.6){$3$};
\node (e) at (2,.25){$1$};\node (s) at (1,-.1){$2$};
\node (c) at (1,.25){$0$};
\draw[ thick, >=stealth,->,color=blue](se) to [out=135,in=45](sw);
\draw[line width=5,color=white](ne) to [out=-135,in=-45](nw);
\draw[thick, >=stealth,->,color=blue](ne) to [out=-135,in=-45](nw);
\end{tikzpicture}
\end{equation}
with the associated amplitude
\begin{equation}
\mym{L_{2.2}}=[0]^{\boxplus  4}.
\end{equation} 
Any link diagram $D$ with $n\ge4$ regions, containing $L_{2.2}$, has the amplitude of the form
\begin{equation}
\mym{D}=(\mym{L_{2.2}}\boxplus  [0]^{n-4})+([0]\boxplus  A)=[0]\boxplus  A
\end{equation}
with some $A\in\mysms{n-1}{R}$. On the other hand, $A$ is the amplitude of the diagram $D'$ obtained from $D$ by replacing $L_{2.2}$ by the diagram
\begin{equation}
R_{2.2}:=
\begin{tikzpicture}[baseline=7.3,scale=1.5,spin/.style={circle,draw=black!50,fill=black!10,thick}]
\coordinate (ne)  at (.5,.5);
\coordinate (nw)  at (0,.5);
\coordinate (se)  at (.5,0);
\coordinate (sw)  at (0,0);
\node (w) at (.25,.25){$1$};\node (n) at (.25,.6){$3$};
\node (c) at (0.25,-.1){$2$};
\draw[ thick, >=stealth,->,color=blue](se) to [out=135,in=45](sw);
\draw[thick, >=stealth,->,color=blue](ne) to [out=-135,in=-45](nw);

\end{tikzpicture}\ .
\end{equation}
By using the same relation $m_D=1+m_{D'}$ as in the previous case, we obtain another $SR$-equivalence
\begin{equation}
(-m_{D},\mym{D})=\left(-m_{D},[0]\boxplus  \mym{D'}\right)\sim \left(1-m_{D}, \mym{D'}\right)=(-m_{D'},\mym{D'}).
\end{equation}
(RII:3) Next, we consider the graph
\begin{equation}\label{reiII-221}
L_{2.3}:=
\begin{tikzpicture}[baseline=7.3,scale=1.5,spin/.style={circle,draw=black!50,fill=black!10,thick}]
\coordinate (ne)  at (2,.5);
\coordinate (nw)  at (0,.5);
\coordinate (se)  at (2,0);
\coordinate (sw)  at (0,0);
\node (w) at (0,.25){$1$};\node (n) at (1,.6){$4$};
\node (e) at (2,.25){$2$};\node (s) at (1,-.1){$3$};
\node (c) at (1,.25){$0$};
\draw[ thick, >=stealth,->,color=blue](se) to [out=135,in=45](sw);
\draw[line width=5,color=white](ne) to [out=-135,in=-45](nw);
\draw[thick, >=stealth,->,color=blue](nw) to [out=-45,in=-135](ne);
\end{tikzpicture}
\end{equation}
whose amplitude reads
\begin{equation}
\mym{L_{2.3}}=
\begin{bmatrix}
 0&-1&1&0&0\\
-1  &-1&0&-x&-x\\ 
 1 &0&1&x&x\\ 
  0&-x&x&0&0\\ 
 0 &-x&x&0&0
\end{bmatrix}.
\end{equation}

Any link diagram $D$ with $n\ge5$ regions, containing $L_{2.3}$  with the pairwise distinct regions, has the amplitude of the form
\begin{equation}
\mym{D}=(\mym{L_{2.3}}\boxplus  [0]^{n-5})+([0]\boxplus  A)
\end{equation}
with some $A\in\mysms{n-1}{R}$. Now, the equality
\begin{equation}\label{eq-with-e12-r221}
(1+E_{2,1})\mym{L_{2.3}}(1+E_{1,2})=
\begin{bmatrix}
 0&-1&0&0&0\\
-1  &-1&-1&-x&-x\\ 
0&-1&0&0&0\\
0&-x&0&0&0\\
0&-x&0&0&0
\end{bmatrix}
\end{equation}
and Lemma~\ref{mainlem} imply that $\mym{D}$ is congruent to $\begin{bmatrix}
 0&-1\\-1&r
\end{bmatrix}\boxplus  f_*(A)$
with some $r\in R$ and where
 \begin{equation}
f\colon n-1\to n-2,\quad i\mapsto i-1+\delta_{0,i},
\end{equation}
is the map which identifies the regions $1$ and $2$ in the diagram $D'$ which is $D$ where the part $L_{2.3}$ is replaced by 
\begin{equation}
R_{2.3}:=
\begin{tikzpicture}[baseline=7.3,scale=1.5,spin/.style={circle,draw=black!50,fill=black!10,thick}]
\coordinate (ne)  at (.5,.5);
\coordinate (nw)  at (0,.5);
\coordinate (se)  at (.5,0);
\coordinate (sw)  at (0,0);
\node (w) at (0.25,.25){$2$};
\node (n) at (.25,.6){$4$};
\node (c) at (0.25,-.1){$3$};
\draw[ thick, >=stealth,->,color=blue](se) to [out=135,in=45](sw);
\draw[thick, >=stealth,->,color=blue](nw) to [out=-45,in=-135](ne);

\end{tikzpicture}\ ,
\end{equation}
so that $f_*(A)=\mym{D'}$, and the role of the push-forward map $f_*$ is to implement the identification of the regions $1$ and $2$ in the amplitude of the diagram $D'$. Furthermore, the matrix identity
\begin{equation}
\begin{bmatrix}
 1&0\\
 0&-1
\end{bmatrix}
\begin{bmatrix}
 0&-1\\
 -1&r
\end{bmatrix}
\begin{bmatrix}
 1&0\\
 0&-1
\end{bmatrix}
=\begin{bmatrix}
 0&1\\
 1&r
\end{bmatrix}
\end{equation}
ensures that the matrices $\begin{bmatrix}
 0&-1\\
 -1&r
\end{bmatrix}$ and $\begin{bmatrix}
 0&1\\
 1&r
\end{bmatrix}$
are congruent.  Thus, again taking into account the differences in the numbers of positive and negative crossings in $L_{2.3}$ and $R_{2.3}$, we have the equality $m_D=1+m_{D'}$, and we obtain an $SR$-equivalence of the form~\eqref{SR-eq-for-r211}. Similarly to the case with diagram~\eqref{reiII}, we can push forward the congruence~\eqref{eq-with-e12-r221} to the cases with outer region identifications under the condition that the regions $1$ and $2$ stay distinct in $D$. The case, where the latter condition is not satisfied, is handled separately in complete analogy with diagram~\eqref{reiII-212}.

(RI) For the Reidemeister moves of type I, we consider a diagram with positive crossing
\begin{equation}\label{rI+}
L_1:=\begin{tikzpicture}[baseline=-2,scale=.5,spin/.style={circle,draw=black!50,fill=black!10,thick}]
\coordinate (ne)  at (1,.5);
\coordinate (nw)  at (-1,1);
\coordinate (se)  at (1,-.5);
\coordinate (sw)  at (-1,-1);
\node[left] (w) at (-.5,0){$1$};\node[right] (n) at (1.5,0){$2$};\node (e) at (.6,0){$0$};

\draw[ thick, >=stealth,->,color=blue](se) to [out=-135,in=-45](nw);
\draw[line width=5,color=white](sw) to [out=45,in=135](ne);
\draw[thick, color=blue](sw) to [out=45,in=135](ne);
\draw[thick, color=blue](ne) to [out=-45,in=45](se);
\end{tikzpicture}
\end{equation}
with the associated amplitude
\begin{equation}
\mym{L_1}=
\begin{bmatrix}
 1&1&2x\\
 1&1&2x\\
 2x&2x&4x^2
\end{bmatrix}
\end{equation}
which satisfies the congruence relation
\begin{equation}\label{cong-for-r1}
(1-v\mytr{w})\mym{L_1}(1-w\mytr{v})=[1]\boxplus  [0]^{\boxplus  2}.
\end{equation}
where
\begin{equation}
v:=\chi_{\{1\}}+2x\chi_{\{2\}},\quad w:=\chi_{\{0\}}.
\end{equation}
Let $D$  be a link diagram with $n\ge3$ crossings containing $L_1$ as a part. The amplitude of $D$ is given by the formula
\begin{equation}
\mym{D}=\left(\mym{L_1}\boxplus [0]^{\boxplus(n-3)}\right)+\left([0]\boxplus A\right)
\end{equation}
where $A\in \mysms{n-1}{R}$. By the canonical embedding $f\colon 3\to n$, the congruence relation~\eqref{cong-for-r1} in $R^3$ is pushed forward to the congruence relation in $R^n$
\begin{equation}
(1-f_*(v)\mytr{\left(f_*(w)\right)})\mym{L_1}(1-f_*(w)\mytr{\left(f_*(v)\right)})=[1]\boxplus \mym{D'}
\end{equation}
where $\mym{D'}=A$ is the amplutude of the diagram $D'$  obtained from $D$ by replacing the part $L_1$ by the diagram
\begin{equation}
R_1:=
\begin{tikzpicture}[baseline=-2,scale=.5,spin/.style={circle,draw=black!50,fill=black!10,thick}]
\coordinate (nw)  at (-1,1);
\coordinate (sw)  at (-1,-1);
\node[left] (w) at (-1,0){$1$};
\node[right] (e) at (-.2,0){$2$};

\draw[ thick, >=stealth,->,color=blue](sw) to [out=45,in=-45](nw);
\end{tikzpicture}\ .
\end{equation}
Taking into account the equality $m_D=\chi_{\{0\}}+m_{D'}$, we 
 obtain  an $SR$-equivalence
\begin{equation}
(-m_{D},\mym{D})\sim (-m_{D},[1]\boxplus \mym{D'})\sim (\chi_{\{0\}}-m_{D},\mym{D'})\sim(-m_{D'},\mym{D'}).
\end{equation}
All other cases, including those with a negative crossing in diagram~\eqref{rI+} is analysed in a completely similar way.
\end{proof}
\section{Examples of results of calculation}\label{sec3}
\subsection{The $n$ component trivial link}
By considering the simplest diagram
\begin{equation}
D=
\begin{tikzpicture}[baseline=-2,scale=1]
\draw[thick, directed,color=blue] (0,0) circle (.5);
\draw[thick, directed,color=blue] (1.5,0) circle (.5);
\draw[thick, directed,color=blue] (3.5,0) circle (.5);
\node at (0,0) {$0$};\node at (1.5,0) {$1$};
\node at (2.5,0) {$\ldots$};
\node at (3.5,0) {${n-1}$};
\node at (4.5,0) {${n}$};
\end{tikzpicture}
\end{equation}
with $(\myc{+},\myc{-})=(0,0)$, the amplitude is the trivial  zero matrix
$\mym{D}=[0]^{\boxplus  (n+1)}$
so that 
\begin{equation}
(-m_D,\mym{D})\sim (n+1,[]).
\end{equation}

\subsection{The right handed Hopf link} We consider the diagram
\begin{equation}
D=
\begin{tikzpicture}[baseline=-2,scale=1]
 \coordinate (e0)  at (0,0);
 \coordinate (e1)  at (1,0);
 \coordinate (e2)  at (2,0);
\coordinate (e3)  at (3,0);

\draw[thick, directed,color=blue] (e2) to [out=90,in=90] (e0);
\draw[thick, directed,color=blue] (e3) to [out=-90,in=-90] (e1);

\draw[line width=5,color=white] (e1) to [out=90,in=90] (e3);
\draw[thick,directed,color=blue] (e1) to [out=90,in=90] (e3);
\draw[line width=5,color=white](e0) to [out=-90,in=-90](e2);
\draw[thick,  directed,color=blue] (e0) to [out=-90,in=-90](e2);

\node at (.5,0) {$0$};
\node at (1.5,0) {$1$};
\node at (2.5,0) {$2$};
\node at (3.5,0) {$3$};
\end{tikzpicture}
\end{equation}
with $(\myc{+},\myc{-})=(2,0)$ so that the amplitude takes the form
\begin{equation}
\mym{D}=
\begin{bmatrix}
 2&y&2&y\\
y &y^2-2&y&2\\
2 &y&2&y\\
y &2&y&y^2-2
\end{bmatrix},\quad y:=2x,
\end{equation}
so that
\begin{equation}
(-m_D,\mym{D})\sim \left(2\chi_{\{1\}}, [2x^2-2]\boxplus  [2]\right).
\end{equation}
\subsection{The right handed trefoil}
We consider the diagram
\begin{equation}
D=
\begin{tikzpicture}[baseline=-2,scale=.7]
 \coordinate (e1)  at (0:1);
 \coordinate (e2)  at (0:2);
 \coordinate (nw1)  at (120:1);
\coordinate (nw2)  at (120:2);
\coordinate (sw1)  at (-120:1);
\coordinate (sw2)  at (-120:2);

\draw[thick, directed,color=blue] (e2) to [out=90,in=30] (nw1);
\draw[thick, directed,color=blue] (nw2) to [out=-150,in=150] (sw1);
\draw[thick,  directed,color=blue] (sw2) to [out=-30,in=-90] (e1);

\draw[line width=5,color=white] (e1) to [out=90,in=30] (nw2);
\draw[thick,directed,color=blue] (e1) to [out=90,in=30] (nw2);

\draw[line width=5,color=white] (nw1) to [out=-150,in=150] (sw2);
\draw[thick,  directed,color=blue] (nw1) to [out=-150,in=150] (sw2);

\draw[line width=5,color=white] (sw1) to [out=-30,in=-90] (e2);
\draw[thick, directed,color=blue] (sw1) to [out=-30,in=-90] (e2);

\node at (0,0) {$0$};
\node at (0:1.5) {$1$};\node at (120:1.5) {$2$};\node at (-120:1.5) {$3$};
\node at (0:3) {$4$};
\end{tikzpicture}
\end{equation}
with $(\myc{+},\myc{-})=(3,0)$. The amplitude takes the form
\begin{equation}
\mym{D}=
\begin{bmatrix}
 3&y &y &y &3\\
 y &y ^2-2&1&1&y \\
  y &1&y ^2-2&1&y \\
y &1&1&y ^2-2&y \\
3 &y &y &y &3
\end{bmatrix},\quad y:=2x,
\end{equation}
so that
\begin{equation}
(-m_D,\mym{D})\sim \left(2\chi_{\{1\}}, (y^2-3)[1]^{\boxplus  2}\right).
\end{equation}
\subsection{The figure-eight knot}
We take the diagram
\begin{equation}
D=
\begin{tikzpicture}[baseline=40,xscale=1.2,yscale=.8]
 \coordinate (c0)  at (0,0);
 \coordinate (c1)  at (0,1);

\coordinate (l1)  at (-.5,2);
\coordinate (r1)  at (.5,2);
\coordinate (r2)  at (.5,2.8);
\coordinate (l2)  at (-.5,2.8);
\coordinate (r3)  at (1.75,3);
\coordinate (l3)  at (-1.75,3);

\draw[thick, directed,color=blue] (c0) to [out=0,in=-45] (r1);
\draw[thick, directed,color=blue] (l2) to [out=90,in=135] (r3);
\draw[thick, color=blue] (c1) to [out=180,in=-135] (l3);
\draw[thick,color=blue] (r2) to [out=-90,in=45] (l1);

\draw[line width=5,color=white] (l1) to [out=-135,in=180] (c0);
\draw[thick,directed,color=blue](l1) to [out=-135,in=180] (c0);

\draw[line width=5,color=white] (r1) to [out=135,in=-90] (l2);
\draw[thick,directed,color=blue] (r1) to [out=135,in=-90] (l2);
\draw[line width=5,color=white] (r3) to [out=-45,in=0] (c1);
\draw[thick,directed,color=blue](r3) to [out=-45,in=0] (c1);
\draw[line width=5,color=white] (l3) to [out=45,in=90] (r2);
\draw[thick,directed,color=blue] (l3) to [out=45,in=90] (r2);

\node at (-1.5,.7) {$0$};
\node at (0,.5) {$1$};
\node at (0,1.5) {$2$};
\node at (0,2.8) {$3$};
\node at (-1,2.5) {$4$};
\node at (1,2.5) {$5$};

\end{tikzpicture}
\end{equation}
with $(\myc{+},\myc{-})=(2,2)$. The amplitude takes the form
\begin{equation}
\mym{D}=
\begin{bmatrix}
3-y^2 &-y&-2&1&0&0\\
-y &-2&-y&0&-1&-1\\
-2 &-y&3-y^2 &1&0&0\\
 1&0&1&2&y&y\\
0 &-1&0&y&y^2-3&2\\
0 &-1&0&y&2&y^2-3
\end{bmatrix},\quad y:=2x,
\end{equation}
so that
\begin{equation}
(-m_D,\mym{D})\sim \left(1, (y^2-5)
\begin{bmatrix}
 0&1\\1&0
\end{bmatrix}
\right).
\end{equation}
\def\cprime{$'$} \def\cprime{$'$}

  \end{document}